\newtheorem{theorem}{Theorem}
    \newtheorem{corollary}[theorem]{Corollary}
    \newtheorem{lemma}[theorem]{Lemma}
    \newtheoremstyle{def} % name
    {\topsep}                    % Space above
    {\topsep}                    % Space below
    {}                   % Body font
    {}                           % Indent amount
    {\bfseries}                   % Theorem head font
    {.}                          % Punctuation after theorem head
    {.5em}                       % Space after theorem head
    {}  % Theorem head spec (can be left empty, meaning normal)
\theoremstyle{def}
\newtheorem{remark}[theorem]{Remark}
\newtheorem{definition}[theorem]{Definition}
\newcommand{\oper}[1]{\operatorname{#1}}
\newcommand{\ir}[1]{\mathcal{#1}}
\newcommand{\RR}{\mathbb{R}}
\newcommand{\inv}{^{-1}}
\newcommand{\iso}{\operatorname{iso}}
\newcommand{\parcv}[2]{\frac{\partial#1}{\partial#2}}
\author{B.~Aradi, D.~Cs.~Kert\'esz}       % {start page}{end page}{Initials followed by family_name - (only) first caps}
\title{A characterization of holonomy invariant functions on tangent bundles} % this should be written in sentence-case
\begin{document}
\maketitle
%
%-------------------------------------------------------------------
%
\begin{abstract}
    We show that the holonomy invariance of a function on the tangent bundle of a manifold, together with very mild regularity conditions on the function, is equivalent to the existence of local parallelisms compatible with the function in a natural way. Thus, in particular, we obtain a characterization of generalized Berwald manifolds. We also construct a simple example of a generalized Berwald manifold which is not Berwald.
\end{abstract}

AMS \emph{Subject Class.}\ (2010):
    53B05,  %Linear and affine connections
    53B40. %Finsler spaces and generalizations (areal metrics)
      % The 2010 MSC codes are located in the PDF copy on the journal webpage www.mathem.pub.ro/bjga

      \emph{Key words:}
   holonomy invariance; parallel translation; parallelism; generalized Ber\-wald manifold; one-form manifold.  % Keywords are written in small-case and are separated by ";". The last one ends with "."
%-------------------------------------------------------------------
%
\section{Introduction}
A function given on the tangent bundle of a manifold is said to be holonomy invariant if there is a covariant derivative on the manifold whose parallel translations preserve the function. The Finsler function of a generalized Berwald manifold is an example of such a function. So is, in particular, the Finsler function of a Berwald manifold, in which case the covariant derivative is torsion-free and unique.

Berwald manifolds have been studied intensely; many equivalent definitions and characterizations are known (see, e.g., \cite{SZLK11}), and there is a nice classification of this type of Finsler manifolds due to the structure theorem of Szab\'o \cite{Sza81}. Such a classification of generalized Berwald manifolds is not yet known, nevertheless many interesting papers have been written on the subject, for example, by Hashiguchi and Ichijy\=o \cite{HaIc82}, Ichijy\=o \cite{Ich76.1,Ich76.2}, Szak\'al and Szilasi \cite{SzaSzi01}, Tam\'assy \cite{TAM00} and Vincze \cite{Vin,Vin2}.

The present work was strongly motivated by the papers \cite{Ich76.1,Ich76.2} of Ichijy\=o, in which he proved that the connected generalized Berwald manifolds are the same as the so-called $\{V,H\}$-manifolds. Ichijy\=o was interested in `Finsler manifolds modeled on a Minkowski space', that is, Finsler manifolds such that the tangent spaces are `isometrically linearly isomorphic' to a single Minkowski space. He introduced the slightly stronger concept of a $\{V,H\}$-manifold, consisting of a vector space $V$ endowed with a Minkowski norm (or a Finsler norm, as we prefer to call it) and a manifold with an $H$-structure (in the sense of a $G$-structure), where $H$ is a Lie subgroup of $\oper{GL}(V)$ leaving the Minkowski norm invariant. Such a manifold can be endowed with a Finsler function which is modeled on the Minkowski space $V$. One can use the $H$-compatible local trivializations of the tangent bundle to transfer the Minkowski norm of $V$ to the tangent spaces. The so obtained Finsler function is well-defined, because the transition mappings between $H$-compatible trivializations preserve the Minkowski norm by assumption. The surprising result of Ichijy\=o was that $\{V,H\}$-manifolds are no more general than generalized Berwald manifolds.

It is worth noting that the Finsler function constructed on a $\{V,H\}$-manifold is locally a one-form Finsler function. Indeed, each $H$-compatible local trivialization can be identified with a local co-frame $(\alpha_i)_{i=1}^n$, then our Finsler function is locally of the form $F=f\circ(\alpha_1,\dots,\alpha_n)$, where $f$ is a Minkowski norm on $\RR^n$. For a systematic study of one-form Finsler functions, see \cite{MaShi}.

Hashiguchi suggested (Problem 9 in \cite{Hash83}) that one should define $\{V,H\}$-manifolds under weaker conditions, more precisely, that the conditions on the Finsler function are too strong. In this paper we generalize Ichijy\=o's concept. We consider an arbitrary function on the tangent manifold compatible with a covering parallelism (Definition~\ref{parallcomp}). We use parallelisms instead of an $H$-structure for conceptual simplicity only, so if the function is in particular a Finsler function, our notion is equivalent to that of $\{V,H\}$-manifolds.

Using our new definition, we reformulate and also generalize Ichijy\=o's theorem: instead of the strong regularity conditions imposed on Finsler functions, we require only continuity and a kind of definiteness. Under such mild assumptions we prove that the function is holonomy invariant if, and only if, it is compatible with a covering parallelism on the manifold (Theorem~\ref{chargenber}). As a corollary, by applying this result to a Finsler function, we obtain a characterization of generalized Berwald manifolds (Corollary~\ref{genber}), analogous to Ichijy\=o's result.

The structure of the paper is as follows. In Section~\ref{sec:prel} we introduce our notation and conventions, and we also recall some basic facts concerning parallelisms. The next section is devoted to the preparations required for the proof of our main result in Section~\ref{sec:res}. Finally, we present a simple example of a non-Berwaldian generalized Berwald manifold.
%
%-------------------------------------------------------------------
%
\section{Preliminaries}\label{sec:prel}

Throughout the paper, by a \emph{manifold} we mean a smooth manifold of dimension $n$ \mbox{($n\geq 2$)}, whose underlying topological space is Hausdorff, second countable and connected. The \emph{tangent bundle} of a manifold $M$ is $\tau\colon TM\to M$.

By a \emph{curve} in a manifold we shall always mean a regular smooth curve whose domain is an open interval containing $0$.

Consider a curve $\gamma\colon I \to M$. A \emph{vector field along} $\gamma$ is a smooth mapping $X$ from $I$ to $TM$ such that $\tau\circ X=\gamma$.
A covariant derivative $\nabla$ on $M$ induces a covariant derivative $\nabla_\gamma$ on the $ C^\infty(I)$-module of vector fields along $\gamma$ such that for every $t\in I$ we have $\nabla_\gamma X(t):=\nabla_{\dot\gamma(t)}\bar X$, where $\dot\gamma(t)$ is the velocity of $\gamma$ at $t$, and $\bar X$ is a vector field on $M$ such that (locally) $\bar X\circ\gamma=X$.

A vector field $X$ along $\gamma$ is \emph{parallel} (\emph{with respect to} $\nabla$) if it satisfies the ordinary differential equation $\nabla_\gamma X=0$. The \emph{parallel translation} along $\gamma$ from $\gamma(0)$ to $\gamma(t)$ is the mapping
\[
P_\gamma^{t}\colon T_{\gamma(0)}M\to T_{\gamma(t)}M,\ v\mapsto X(t),
\]
where $X$ is the unique parallel vector field along $\gamma$ such that $X(0)=v$. As is well-known, this mapping is a linear isomorphism between the tangent spaces. Later we simply write $P_\gamma$ for $P_\gamma^1$ if $I$ contains $1$.
\bigskip

Let $\pi\colon \ir P\to M\times M$ be the vector bundle over $M\times M$ whose fibre at a point $(p,q)$ is the real vector space $\oper{Hom}(T_pM,T_qM)$. A \emph{parallelism} on $M$ is a smooth section $P$ of this vector bundle satisfying
\[
P(r,q)\circ P(p,r)=P(p,q) \ \mbox{ and }\ P(p,p)=1_{T_pM}
\]
for all $p,q,r\in M$ (see\ \cite[p.~174]{GHV72}). These conditions imply that the mappings
\[
P(p,q)\colon T_pM\to T_qM, \ (p,q)\in M\times M
\]
are actually bijective.

Most manifolds do not admit a parallelism. Exactly those manifolds share this property, which can be equipped with a global frame field.
These manifolds are said to be \emph{parallelizable}. However, any point in a manifold has an open neighbourhood, which is, as an open submanifold, parallelizable.
Sometimes for a parallelism $P$ on an open submanifold $\ir U$ of $M$ we use the notation $(\ir U,P)$. A vector field $X$ on $\ir U$ is called \emph{$P$-parallel} if $X(q)=P(p,q)(X(p))$ for any two points $p,q$ in $\ir U$.

By a \emph{covering parallelism} of a manifold $M$ we mean a family $(\ir U_\alpha,P_\alpha)_{\alpha\in\ir A}$ of parallelisms, where $(\ir U_\alpha)_{\alpha\in\ir A}$ is an open covering of $M$.

A parallelism $(\ir U,P)$ induces a trivialization $\varphi$ of $TM$ over $\ir U$, given by
\[
\varphi\colon(q,v)\in \ir U\times \RR^n\mapsto \varphi(q,v):=P(p,q)\circ\eta_p(v)\in TM,
\]
where $p$ is a fixed point in $\ir U$ and $\eta_p$ is an arbitrary linear isomorphism from $\RR^n$ to $T_pM$. (Note that $\varphi$ depends on $p$ and $\eta_p$.) Then for any two points $q$ and $r$ in $\ir U$ we have
\begin{equation}\label{paraltriv}
  P(q,r)\circ\varphi_q=\varphi_r,
\end{equation}
where $\varphi_q$ stands for the mapping $v\in\RR^n\mapsto \varphi_q(v):=\varphi(q,v)\in T_qM$.
%
%-------------------------------------------------------------------
%
\section{Compatibility notions and auxiliary results}

If $M$ is a manifold and $F\colon TM \to \RR$ is any function, we use the notation $F_p$ for the restriction $F\upharpoonright T_pM$ ($p\in M$).

In this section we introduce a natural notion of compatibility of such functions with a covariant derivative and a parallelism. Roughly speaking, `compatibility' means here that the linear isomorphisms (between the tangent spaces) induced by the given additional structure on $M$ leave the function $F$ invariant. For example, given a Finsler manifold $(M,F)$ and a covariant derivative $\nabla$ on $M$ we can ask whether the induced parallel translations preserve the Finsler norms of tangent vectors.

Now the precise definitions:

\begin{definition}\label{nablacomp}
Let $\nabla$ be a covariant derivative on a manifold $M$ and $F$ a function on $TM$. We say that $F$ is \emph{holonomy invariant with respect to} $\nabla$, or $F$ is \emph{compatible with} $\nabla$, if the parallel translations induced by $\nabla$ preserve $F$, that is, for any curve $\gamma\colon I\to M$ and parameter $t\in I$ we have
  \[
  F_{\gamma(t)}\circ P^t_\gamma=F_{\gamma(0)}.
  \]
\end{definition}

\begin{definition}\label{parallcomp}
A function $F$ on $TM$ is \emph{compatible with a parallelism} $P$ on $M$ if $F$ takes the same value on parallel vectors,
that is, for any $p,q\in M$ the relation
  \[
  F_q\circ P(p,q)=F_p
  \]
holds. The function $F$ is \emph{compatible with a covering parallelism} $(\ir U_\alpha,P_\alpha)_{\alpha\in\ir A}$ if the restriction
of $F$ to $\tau\inv(\ir U_\alpha)$ is compatible with the parallelism $(\ir U_\alpha,P_\alpha)$ for all $\alpha\in\ir A$.
\end{definition}

In Section \ref{sec:res} we will show that for a very general class of functions on $TM$ the compatibility with a covariant derivative and with a covering parallelism are equivalent properties. In the remainder of this section we develop some technical results required for the proof.

Our first observation is that the compatibility of a function on $TM$ and a parallelism $P$ can be expressed also in terms of a trivialization induced by $P$:
\begin{lemma}\label{lmm:Pf}
 If a function $F\colon TM\to \RR$ is compatible with a parallelism $(\ir U,P)$ and $\varphi$ is a local trivialization of $TM$ over $\ir U$ induced by $P$,
then there exists a function $f$ on $\RR^n$ such that $f=F_p\circ \varphi_p$ for all $p\in\ir U$.
\end{lemma}

\begin{proof}
Consider the diagram
  \[
  \begin{CD}
  \RR^n @>\varphi_p>> T_pM @> F_p>>\RR\\
  @V 1_{\RR^n}VV @VV P(p,q)V @VV 1_\RR V\\
  \RR^n @>\varphi_q>> T_qM @> F_q>>\RR
  \end{CD}
  \]
  for some $p,q\in\ir U$. The left part of the diagram commutes by (\ref{paraltriv}), while the right part commutes by the compatibility of $F$ and $P$. Hence the entire diagram is commutative and we have $F_p\circ \varphi_p=F_q\circ \varphi_q$. Thus the function $F_p\circ\varphi_p$ is independent of the chosen point $p$ of $\ir U$, so we can set $f:=F_p\circ\varphi_p$.
\end{proof}

The next lemma is a mild generalization of a result of Ichijy\=o \cite{Ich76.1}.

\begin{lemma}\label{isoflie}
  Let $V$ be a finite dimensional real vector space, and let $f\colon V \to \RR$ be a continuous function which vanishes at $0$, and only there.
Then the `isometry group'
  \[
  \iso(f):=\{A\in\operatorname{End}(V)\mid f\circ A=f\}
  \]
  of $f$ is a Lie subgroup of $\oper{GL}(V)$.
\end{lemma}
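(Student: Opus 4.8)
The plan is to reduce the statement to the closed subgroup theorem (Cartan's theorem): any subgroup of a Lie group that is closed in the subspace topology is an embedded Lie subgroup. Taking $\oper{GL}(V)$ as the ambient Lie group, it then suffices to establish three facts: that $\iso(f)$ is contained in $\oper{GL}(V)$, that it is a subgroup, and that it is topologically closed.

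First I would verify the inclusion $\iso(f)\subseteq\oper{GL}(V)$, and this is exactly where the definiteness of $f$ is used. If some $A\in\iso(f)$ had a nonzero vector $v$ in its kernel, then $f(v)=f(Av)=f(0)=0$ would contradict the assumption that $f$ vanishes only at $0$; hence every $A\in\iso(f)$ is injective, and since $\dim V<\infty$, it is a linear automorphism. The subgroup axioms are then purely formal: the identity clearly lies in $\iso(f)$; if $f\circ A=f$ and $f\circ B=f$ then $f\circ(A\circ B)=(f\circ A)\circ B=f$; and composing $f\circ A=f$ with $A\inv$ on the right yields $f=f\circ A\inv$, so inverses are preserved.

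The heart of the argument is closedness, and here the continuity of $f$ does the work. I would write
\[
\iso(f)=\bigcap_{v\in V}\{A\in\operatorname{End}(V)\mid f(Av)=f(v)\}.
\]
For each fixed $v$ the evaluation map $A\mapsto Av$ is linear, hence continuous, so $A\mapsto f(Av)-f(v)$ is continuous and its zero set is closed in $\operatorname{End}(V)$. As an intersection of closed sets, $\iso(f)$ is closed in $\operatorname{End}(V)$; since it is contained in the open subset $\oper{GL}(V)$, it is also closed in the subspace topology of $\oper{GL}(V)$. An application of the closed subgroup theorem then finishes the proof.

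I do not expect a serious obstacle here: the real content is recognizing that the two hypotheses are precisely calibrated to the two nontrivial requirements — definiteness forces $\iso(f)$ into $\oper{GL}(V)$, while continuity forces it to be closed — after which the closed subgroup theorem supplies the smooth structure for free. The only point demanding a little care is to distinguish closedness of $\iso(f)$ inside $\operatorname{End}(V)$ from closedness inside $\oper{GL}(V)$; the passage between the two relies on $\iso(f)$ already sitting inside $\oper{GL}(V)$.
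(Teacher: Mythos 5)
Your proposal is correct and follows essentially the same route as the paper: definiteness forces $\iso(f)\subseteq\oper{GL}(V)$, continuity of $f$ gives closedness, and Cartan's closed subgroup theorem concludes. The only cosmetic difference is that you verify closedness by writing $\iso(f)$ as an intersection of zero sets of continuous functions, whereas the paper uses an equivalent sequential argument.
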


\begin{proof}
Notice first that the elements of $\iso(f)$ are invertible. Indeed, for any $A$ in $\iso(f)$ and any vector $v$ in $V\setminus\{0\}$ we have $f\circ A(v)=f(v)\neq0$, thus $A(v)=0$ is impossible by our condition on $f$. So it follows that $\iso(f)$ is a subset of $\oper{GL}(V)$ and also that $\iso(f)$ is a subgroup of $\oper{GL}(V)$.

It remains to show that the subgroup $\iso(f)$ is closed, then Cartan's closed subgroup theorem implies that $\iso(f)$ is indeed a Lie group. To do this, consider a sequence $(A_k)$ in $\iso(f)$ and assume that it converges to $A\in\operatorname{End}(V)$. Then, taking into account the continuity of $f$, we obtain
  \[
  f(A(v))=f\left(\lim_{k\to\infty}A_k(v)\right)=\lim_{k\to\infty}f(A_k(v))=\lim_{k\to\infty}f(v)=f(v)
  \]
for any $v\in V$. This proves that $A\in \iso(f)$, whence $\iso(f)$ is closed in $\oper{GL}(V)$.
\end{proof}

Our third lemma can be found in \cite{Wendl08} as an exercise; for the reader's convenience we present it with a proof.

\begin{lemma}\label{lmm:isom}
Let $G$ be a Lie subgroup of $\oper{GL}(\RR^n)$, $\mathfrak g$ its Lie algebra, and let $A\colon I\to\mathfrak g$ be a curve.
If $\Phi\colon I\to\oper{GL}(\RR^n)$ is a solution of the initial value problem
\begin{equation}\label{eq:DE}
\Phi'(t)=A(t)\cdot\Phi(t),\quad \Phi(0)=\oper I_n,
\end{equation}
then it takes values only in $G$. (Here the dot stands for matrix multiplication, and $\oper I_n$ is the $n$ by $n$ identity matrix.)
\end{lemma}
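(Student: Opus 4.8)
The plan is to read the equation \eqref{eq:DE} geometrically: its right-hand side defines a time-dependent vector field on $G$ whose integral curve through $\oper I_n$ must, by uniqueness of solutions, coincide with $\Phi$. First I would recall the description of the tangent spaces of a matrix Lie group. For $g\in G$ the right translation $R_g\colon h\mapsto h\cdot g$ restricts to a diffeomorphism of $G$; being the restriction of a linear map of the ambient matrix algebra, its differential sends $X\in\mathfrak g=T_{\oper I_n}G$ to $X\cdot g$, whence
\[
T_gG=\mathfrak g\cdot g=\{X\cdot g\mid X\in\mathfrak g\}.
\]
Consequently, whenever $g\in G$ the vector $A(t)\cdot g$ lies in $T_gG$, so $(t,g)\mapsto A(t)\cdot g$ is a genuine (time-dependent) vector field on $G$.

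Next I would apply the existence and uniqueness theorem for ordinary differential equations on the manifold $G$ to obtain a maximal integral curve $\Psi\colon J\to G$ of this vector field with $\Psi(0)=\oper I_n$, where $J\subseteq I$ is an open interval containing $0$. Composing with the inclusion $G\hookrightarrow\oper{GL}(\RR^n)$, the curve $\Psi$ satisfies $\Psi'(t)=A(t)\cdot\Psi(t)$ together with $\Psi(0)=\oper I_n$ in $\oper{GL}(\RR^n)$ as well. Since \eqref{eq:DE} is linear, its solution in $\oper{GL}(\RR^n)$ is unique; hence $\Phi$ and $\Psi$ agree on $J$, and in particular $\Phi(t)\in G$ for every $t\in J$.

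The main obstacle is the final step, namely proving $J=I$, i.e.\ that the $G$-integral curve does not stop before exhausting $I$. As $I$ is connected and $J$ is open in $I$, it suffices to show $J$ is closed. Here the subgroup structure is decisive: given $t_k\to t^\ast\in I$ with $t_k\in J$, I would solve \eqref{eq:DE} locally in $G$ with the shifted initial value $\Theta(t_k)=\oper I_n$ and note that uniqueness in $\oper{GL}(\RR^n)$ forces $\Phi(t)=\Theta(t)\cdot\Phi(t_k)$ near $t_k$; since $\Theta$ takes values in $G$ and $\Phi(t_k)=\Psi(t_k)\in G$, the product $R_{\Phi(t_k)}\circ\Theta$ is a smooth $G$-valued extension of $\Psi$ across $t^\ast$. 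The one delicate point is that the local $G$-existence time around $t_k$ be bounded below uniformly as $t_k\to t^\ast$, which holds because $A$ is bounded near $t^\ast$. When $G$ is, as in our application via Lemma~\ref{isoflie}, a \emph{closed} subgroup of $\oper{GL}(\RR^n)$, this step trivializes: $\Phi(t_k)=\Psi(t_k)\in G$ and $\Phi(t_k)\to\Phi(t^\ast)$ immediately give $\Phi(t^\ast)\in G$. (In the closed case one may even bypass the vector-field argument altogether and write $\Phi(t)$ as a limit of products $\prod_k\exp\!\big(A(t_k)\,\Delta t_k\big)$, each factor lying in $G$ because $\exp(\mathfrak g)\subseteq G$, and invoke closedness to place the limit in $G$.)
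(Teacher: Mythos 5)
Your proof is correct, and its core coincides with the paper's: you read $\Phi'(t)=A(t)\cdot\Phi(t)$ as the integral-curve equation of a vector field tangent to $G$, using that right translation identifies $T_gG$ with $\mathfrak g\cdot g$, and then invoke uniqueness of integral curves. (The paper packages the time dependence by passing to the field $(t,g)\mapsto(1_t,R_{A(t)}(g))$ on $\RR\times G$, while you work with a time-dependent field on $G$ directly; that difference is cosmetic.) Where you genuinely go beyond the paper is the final, global step. The paper concludes ``thus $\Phi$ must run in $G$'' essentially from tangency alone, but this inference is not automatic: a Lie subgroup is in general only an immersed, possibly non-closed submanifold, and an integral curve of an ambient vector field tangent to a non-closed submanifold can escape it (already $\partial/\partial x$ on $\RR^2$ is tangent to the open submanifold $\RR^2\setminus\{(1,0)\}$, yet the integral curve through the origin leaves it). Uniqueness gives for free only that the set $J$ of parameters where $\Phi$ agrees with the maximal $G$-valued solution is open; your translation trick---solving locally in $G$ with initial value $\oper{I}_n$ at $t_k$, writing $\Phi(t)=\Theta(t)\cdot\Phi(t_k)$ by uniqueness in $\oper{GL}(\RR^n)$, and using $R_{\Phi(t_k)}\circ\Theta$ to prolong the $G$-valued curve past the limit parameter---is exactly what is needed to show $J$ is also closed, hence equal to $I$. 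Your closing observation is also apt: in the paper's actual application $G=\iso(f)$ is closed in $\oper{GL}(\RR^n)$ by Lemma~\ref{isoflie}, and for closed subgroups the closedness of $J$ is immediate, so the subtle step trivializes there. In short, your argument reproduces the paper's idea and, in addition, supplies the justification for the one step the paper leaves implicit.
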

\begin{proof}
  We show that (\ref{eq:DE}) implies that the curve $t\in I\mapsto(t,\Phi(t))\in\RR\times\oper{GL}(\RR^n)$ is an integral curve of a vector
field on $\RR\times G$, thus $\Phi$ must run in $G$.

  Since $\oper{GL}(\RR^n)$ is an open subset of $\oper M_n(\RR)$, we may identify its tangent manifold with $\oper{GL}(\RR^n)\times \oper M_n(\RR)$.
  If $\varrho_g$ denotes the right translation by $g$ in $\oper{GL}(\RR^n)$ and $R_{A(t)}$ is the right invariant vector field on $\oper{GL}(\RR^n)$ with $R_{A(t)}(\oper I_n)=(\oper I_n,A(t))$, then  we obtain
  \begin{align*}
   \dot\Phi(t)&=(\Phi(t),\Phi'(t))\overset{(\ref{eq:DE})}=(\Phi(t),A(t)\cdot\Phi(t))=(\varrho_{\Phi(t)}\oper I_n,\varrho_{\Phi(t)}A(t))\\
   &=\varrho_{\Phi(t)*}(\oper I_n,A(t))=R_{A(t)}(\Phi(t)).
  \end{align*}
  Thus $t\mapsto (t,\Phi(t))$ is an integral curve of the vector field
  \begin{equation}\label{eq:vfA}
  (t,g)\mapsto \left(1_t,R_{A(t)}(g)\right)
  \end{equation}
  on $\RR\times \oper{GL}(\RR^n)$. However, $R_{A(t)}$ is tangent to the submanifold $G$ of $\oper{GL}(\RR^n)$, and, obviously, (\ref{eq:vfA})
is tangent to $\RR\times G$, so the restriction of (\ref{eq:vfA}) to $\RR\times G$ is a vector field.
\end{proof}

\begin{remark}\label{rmrk:isom}
 The converse of the lemma is immediate: if $\Phi$ is a curve in $G$, then $\Phi'(t)=A(t)\cdot\Phi(t)$ for some curve $A$ in $\mathfrak g$.
\end{remark}
%
%-------------------------------------------------------------------
%
\section{The main result}\label{sec:res}

\begin{theorem}\label{chargenber}
  Let $F\colon TM\to \RR$ be a continuous function which is definite in the sense that $F(v)=0$ if, and only if, $v=0$. Then $F$ is holonomy invariant with respect to some covariant derivative on the manifold $M$ if, and only if, it is compatible with a covering parallelism.
\end{theorem}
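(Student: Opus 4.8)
The plan is to prove the two implications separately; the implication that holonomy invariance yields compatibility with a covering parallelism is the easier one. Assuming $F$ is holonomy invariant with respect to a covariant derivative $\nabla$, I would cover $M$ by normal neighbourhoods $\ir U_\alpha$, fix a base point $p_\alpha\in\ir U_\alpha$ together with a frame of $T_{p_\alpha}M$, and spread this frame over $\ir U_\alpha$ by parallel translation along the radial geodesics emanating from $p_\alpha$. Since parallel translation depends smoothly on its endpoint in a normal neighbourhood, this yields a smooth frame field and hence a parallelism $P_\alpha$ with $P_\alpha(p,q)=\Pi_q\circ\Pi_p\inv$, where $\Pi_x$ denotes radial parallel translation from $p_\alpha$ to $x$. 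The cocycle identities of Definition~\ref{parallcomp} are then immediate from this formula. For compatibility, applying holonomy invariance (Definition~\ref{nablacomp}) to the radial geodesics to $p$ and to $q$ gives $F_p\circ\Pi_p=F_{p_\alpha}=F_q\circ\Pi_q$, whence $F_q\circ P_\alpha(p,q)=F_q\circ\Pi_q\circ\Pi_p\inv=F_p$, as required by Definition~\ref{parallcomp}.

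For the converse, suppose $F$ is compatible with a covering parallelism $(\ir U_\alpha,P_\alpha)_{\alpha\in\ir A}$; the aim is to manufacture a single covariant derivative on $M$ whose parallel translations preserve $F$. On each $\ir U_\alpha$ I would take the trivialization $\varphi^\alpha$ induced by $P_\alpha$, the associated $P_\alpha$-parallel frame $E^\alpha_i(q):=\varphi^\alpha_q(e_i)$ (with $(e_i)$ the standard basis of $\RR^n$), and the flat connection $\nabla^\alpha$ for which this frame is parallel; its parallel translations along any curve in $\ir U_\alpha$ coincide with $P_\alpha$ and so already preserve $F$. By Lemma~\ref{lmm:Pf} there is a function $f_\alpha$ on $\RR^n$ with $f_\alpha=F_q\circ\varphi^\alpha_q$ for every $q\in\ir U_\alpha$, and since $F$ is continuous and definite, so is each $f_\alpha$; thus Lemma~\ref{isoflie} applies and $\iso(f_\alpha)$ is a Lie subgroup of $\oper{GL}(\RR^n)$, whose Lie algebra I denote by $\mathfrak g_\alpha$. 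The central computation is the comparison of the $f_\alpha$ on overlaps: writing $g_{\alpha\beta}(q):=(\varphi^\beta_q)\inv\circ\varphi^\alpha_q$ for the transition function, the two expressions for $F_q$ yield $f_\beta\circ g_{\alpha\beta}(q)=f_\alpha$ for all $q\in\ir U_\alpha\cap\ir U_\beta$. As the right-hand side is independent of $q$, the values $g_{\alpha\beta}(q)$ all lie in one right coset of $\iso(f_\beta)$, and a short conjugation argument (using $\mathfrak g_\alpha=g_{\alpha\beta}(q_0)\inv\,\mathfrak g_\beta\,g_{\alpha\beta}(q_0)$) then shows that the matrix-valued one-form $g_{\alpha\beta}\inv\,dg_{\alpha\beta}$ takes its values in $\mathfrak g_\alpha$.

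The connection is assembled by a partition of unity $(\rho_\alpha)$ subordinate to the cover, setting $\nabla:=\sum_\alpha\rho_\alpha\nabla^\alpha$; because $\sum_\alpha\rho_\alpha=1$, this affine combination satisfies the Leibniz rule and is again a covariant derivative. Expressed in the frame $E^\alpha$, the flat connection $\nabla^\beta$ has connection form $g_{\alpha\beta}\inv\,dg_{\alpha\beta}$ while $\nabla^\alpha$ has form $0$, so by the previous step every contributing term is $\mathfrak g_\alpha$-valued; as $\mathfrak g_\alpha$ is a linear subspace, the connection form $\sum_\beta\rho_\beta\,g_{\alpha\beta}\inv\,dg_{\alpha\beta}$ of $\nabla$ in the frame $E^\alpha$ is itself $\mathfrak g_\alpha$-valued at every point of $\ir U_\alpha$. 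Finally, for a curve $\gamma$ lying in $\ir U_\alpha$, the matrix of $P^t_\gamma$ relative to $E^\alpha$ solves the initial value problem of Lemma~\ref{lmm:isom} with $A(t)$ the negated connection form evaluated on $\dot\gamma(t)$, which lies in $\mathfrak g_\alpha$; hence this matrix stays in $\iso(f_\alpha)$ and preserves $f_\alpha$, that is $F_{\gamma(t)}\circ P^t_\gamma=F_{\gamma(0)}$. Cutting an arbitrary curve into finitely many arcs, each contained in some $\ir U_\alpha$, and composing then gives holonomy invariance on all of $M$.

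I expect the crux, and the step most likely to conceal an error, to be the overlap analysis in the converse: recognising that compatibility forces the transition functions into a fixed coset of $\iso(f_\beta)$, and parlaying this into the claim that $g_{\alpha\beta}\inv\,dg_{\alpha\beta}$ is $\mathfrak g_\alpha$-valued. Everything downstream, namely that a convex combination of $\mathfrak g_\alpha$-valued forms is again $\mathfrak g_\alpha$-valued and that Lemma~\ref{lmm:isom} upgrades this to parallel translations lying inside $\iso(f_\alpha)$, is essentially formal; but it is exactly at the passage through Lemma~\ref{isoflie} that the continuity and definiteness hypotheses on $F$ are consumed.
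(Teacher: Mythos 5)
Your argument is correct, and its skeleton matches the paper's: for the forward direction the paper also spreads a frame from a base point by parallel translation along a distinguished family of curves (it uses $u\inv$-images of line segments in a chart with convex image rather than radial geodesics, an immaterial difference), and for the converse it also takes the flat connections $\nabla^\alpha$ determined by the $P_\alpha$-parallel frames, glues them with a partition of unity, and ultimately relies on Lemma~\ref{isoflie} and Lemma~\ref{lmm:isom}, so the regularity hypotheses are consumed exactly where you predict. The genuine divergence is the middle layer of the converse. The paper proves an auxiliary biconditional (Lemma~\ref{lmm:compalg}): given a parallelism $P$ compatible with $F$, a covariant derivative $\nabla$ is compatible with $F$ if and only if $(\nabla P)_v\in\mathfrak i(F_{\tau(v)})$ for every $v$; it then applies this three times --- to see that each $\nabla^\alpha$ is compatible, to obtain the overlap condition $(\nabla^\beta P_\alpha)_v\in\mathfrak i(F_{\tau(v)})$, and to conclude for the glued connection $\nabla=f_\alpha\nabla^\alpha$. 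You bypass that lemma entirely: compatibility of $\nabla^\alpha$ is immediate because its parallel translation is $P_\alpha$; the overlap condition is phrased as ``$g_{\alpha\beta}\inv\,dg_{\alpha\beta}$ is $\mathfrak g_\alpha$-valued'' and established by your coset-plus-conjugation argument, which is sound (it is the several-variables analogue of Remark~\ref{rmrk:isom}, and the identity $\iso(f_\alpha)=g_{\alpha\beta}(q_0)\inv\iso(f_\beta)\,g_{\alpha\beta}(q_0)$ you need does follow from $f_\alpha=f_\beta\circ g_{\alpha\beta}(q_0)$); and the conclusion comes from feeding the connection form $\sum_\beta\rho_\beta\,g_{\alpha\beta}\inv\,dg_{\alpha\beta}$ directly into Lemma~\ref{lmm:isom}. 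The two overlap conditions are literally equivalent --- the matrix of $(\nabla^\beta P_\alpha)_v$ in the frame $E^\alpha$ is $\bigl(g_{\alpha\beta}\inv\,dg_{\alpha\beta}\bigr)(v)$ --- so the difference is organizational: the paper's route isolates a reusable pointwise criterion for compatibility, stated intrinsically on $M$, while yours keeps the gauge-theoretic cocycle structure explicit and avoids proving a biconditional, at the cost of the coset/Maurer--Cartan step that you yourself identified as the crux.
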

Before the proof, we need a lemma which establishes a relation between compatible parallelisms and covariant derivatives.

Let $\nabla$ be a covariant derivative on an open subset $\ir U$ of $M$ and $(\ir U,P)$ a parallelism. For each $p\in\ir U$ and $v\in T_pM$, we define an endomorphism $(\nabla P)_v$
on $T_pM$ by $(\nabla P)_v(w):=\nabla_vX$, where $X$ is the unique $P$-parallel vector field with $X(p)=w$.

The Christoffel symbols of $\nabla$ with respect to a $P$-parallel frame field $(E_i)_{i=1}^n$ are the smooth functions $\Gamma{}^i_{jk}$ on $\ir U$ given by $\nabla_{E_j}E_k=\Gamma{}^i_{jk} E_i$. Then
\begin{equation}\label{eq:NablaP}
  (\nabla P)_v(w)=w^kv^j\Gamma{}^i_{jk}(p) E_i(p),\quad \mbox{where } v=v^jE_j(p),\ w=w^kE_k(p),
\end{equation}
(summation convention in force).

\begin{lemma}\label{lmm:compalg}
  Let $P$ be a parallelism on a manifold $\ir U$, and let $F\colon T\ir U\to\RR$ be a definite continuous function compatible with $P$. Then a covariant derivative $\nabla$ is compatible with $F$ if, and only if, the endomorphism $(\nabla P)_v$ is in the Lie algebra $\mathfrak i(F_{\tau(v)})$ of $\iso(F_{\tau(v)})$ for any  $v\in T\ir U$.
 \end{lemma}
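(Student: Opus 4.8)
The plan is to translate the whole statement into the trivialization $\varphi$ induced by $P$ and then reduce it to Lemmas~\ref{isoflie} and~\ref{lmm:isom}. First I would record the consequences of the hypotheses on $F$: by Lemma~\ref{lmm:Pf} there is a single function $f$ on $\RR^n$ with $f=F_p\circ\varphi_p$ for every $p\in\ir U$, and since each $\varphi_p$ is a linear isomorphism, $f$ is continuous and vanishes only at $0$; likewise each $F_p$ is definite. Hence Lemma~\ref{isoflie} makes $\iso(f)$ a Lie subgroup $G$ of $\oper{GL}(\RR^n)$ and each $\iso(F_p)$ a Lie subgroup of $\oper{GL}(T_pM)$. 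From $F_p=f\circ\varphi_p\inv$ one reads off $\iso(F_p)=\varphi_p\circ\iso(f)\circ\varphi_p\inv$, and passing to Lie algebras gives $\mathfrak i(F_p)=\varphi_p\circ\mathfrak i(f)\circ\varphi_p\inv$. Consequently, for $v\in T_pM$ the membership $(\nabla P)_v\in\mathfrak i(F_p)$ is equivalent to $\varphi_p\inv\circ(\nabla P)_v\circ\varphi_p\in\mathfrak i(f)$.

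The heart of the argument is to compare, along a curve, the $\nabla$-parallel transport with the trivialization. Fix a curve $\gamma\colon I\to\ir U$ and set $\Phi(t):=\varphi_{\gamma(t)}\inv\circ P^t_\gamma\circ\varphi_{\gamma(0)}\in\oper{GL}(\RR^n)$, so $\Phi(0)=\oper I_n$. Writing a $\nabla$-parallel field $X$ along $\gamma$ in the $P$-parallel frame $(E_i)$ as $X=w^iE_i\circ\gamma$ and using $\nabla_\gamma(E_i\circ\gamma)(t)=\nabla_{\dot\gamma(t)}E_i=(\nabla P)_{\dot\gamma(t)}(E_i(\gamma(t)))$ together with (\ref{eq:NablaP}), the equation $\nabla_\gamma X=0$ becomes the linear system $\dot w=-A(t)w$, where $A(t):=\varphi_{\gamma(t)}\inv\circ(\nabla P)_{\dot\gamma(t)}\circ\varphi_{\gamma(t)}$ is precisely the matrix of $(\nabla P)_{\dot\gamma(t)}$ in the frame $(E_i(\gamma(t)))$. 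Reading this off for every initial vector shows that $\Phi$ solves
\[
\Phi'(t)=-A(t)\cdot\Phi(t),\qquad \Phi(0)=\oper I_n.
\]
Since the Christoffel symbols are smooth, $A$ is a smooth curve in $\oper M_n(\RR)$, and by the first paragraph the hypothesis $(\nabla P)_{\dot\gamma(t)}\in\mathfrak i(F_{\gamma(t)})$ is exactly $A(t)\in\mathfrak i(f)$.

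Next I would translate compatibility analytically. Using $F_q=f\circ\varphi_q\inv$ I compute $F_{\gamma(t)}\circ P^t_\gamma=f\circ\Phi(t)\circ\varphi_{\gamma(0)}\inv$, whereas $F_{\gamma(0)}=f\circ\varphi_{\gamma(0)}\inv$; since $\varphi_{\gamma(0)}\inv$ is onto $\RR^n$, the relation $F_{\gamma(t)}\circ P^t_\gamma=F_{\gamma(0)}$ holds if and only if $f\circ\Phi(t)=f$, i.e.\ $\Phi(t)\in G$. Thus $\nabla$ is compatible with $F$ precisely when, for every curve $\gamma$ and every $t$, the solution $\Phi$ of the above initial value problem runs in $G$.

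Finally I would close both implications with the two flow lemmas. For the converse direction, assuming $(\nabla P)_v\in\mathfrak i(F_{\tau(v)})$ for all $v$, the curve $-A$ lies smoothly in $\mathfrak i(f)$, so Lemma~\ref{lmm:isom} forces $\Phi(t)\in G$ for all $t$, and by the previous paragraph $\nabla$ is compatible with $F$. For the direct direction, assuming $\nabla$ compatible with $F$, the curve $\Phi$ runs in $G$ for every $\gamma$, so Remark~\ref{rmrk:isom} gives $\Phi'(t)\circ\Phi(t)\inv\in\mathfrak i(f)$, which equals $-A(t)$; choosing for a prescribed $v\in T_pM$ a curve with $\dot\gamma(0)=v$ and evaluating at $t=0$ yields $\varphi_p\inv\circ(\nabla P)_v\circ\varphi_p\in\mathfrak i(f)$, hence $(\nabla P)_v\in\mathfrak i(F_p)$. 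I expect the main obstacle to be the careful bookkeeping that derives $\Phi'=-A\Phi$ and shows that the single matrix $A(t)$ simultaneously represents $(\nabla P)_{\dot\gamma(t)}$ and drives the flow whose values must land in $G$; once this identification is secured, the three auxiliary lemmas finish the proof almost mechanically.
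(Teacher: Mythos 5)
Your proof is correct and follows essentially the same route as the paper's: both translate everything into the trivialization induced by $P$, set $\Phi(t)=\varphi_{\gamma(t)}\inv\circ P_\gamma^t\circ\varphi_{\gamma(0)}$, show compatibility of $F$ with $\nabla$ amounts to $\Phi$ running in $\iso(f)$, derive the linear ODE $\Phi'=-A\Phi$ from the parallel-transport equation, and close with Lemmas~\ref{lmm:Pf}, \ref{isoflie}, \ref{lmm:isom} and Remark~\ref{rmrk:isom}. The only cosmetic difference is that you establish the conjugation identity $\mathfrak i(F_p)=\varphi_p\circ\mathfrak i(f)\circ\varphi_p\inv$ at the outset, whereas the paper performs the same conjugation (via its map $\mathbf c$) at the end of the argument.
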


\begin{proof}
We note first that $\iso(F_{\tau(v)})$ is a Lie group by Lemma~\ref{isoflie}, thus we can speak of its Lie algebra $\mathfrak i(F_{\tau(v)})$. Furthermore, since $\iso(F_{\tau(v)})$ is a closed submanifold of the vector space $\oper{End}(T_{\tau(v)}\ir U)$, the Lie algebra $\mathfrak i(F_{\tau(v)})$ can be regarded as a linear subspace of $\oper{End}(T_{\tau(v)}\ir U)$, so the statement $(\nabla P)_v\in\mathfrak i(F_{\tau(v)})$ also makes sense.

Let $\gamma\colon I\to\ir U$ be a curve, $\varphi$ a trivialization of $T\ir U$ induced by $P$ (see the end of Section~\ref{sec:prel}), and define the function $f:=F_{\gamma(0)}\circ\varphi_{\gamma(0)}$ on $\RR^n$. Our first aim is to show that $F$ is invariant under $P_\gamma^t$ for any parameter $t$ (cf.\ Definition~\ref{nablacomp}) if, and only if, the curve $\Phi\colon I\to \oper{GL}(\RR^n)$ given by
\begin{equation}\label{eq:Phi}
 \Phi(t):=\varphi_{\gamma(t)}\inv\circ P_\gamma^t\circ\varphi_{\gamma(0)}
\end{equation}
runs in $\iso(f)$. Indeed, since we also have $f=F_{\gamma(t)}\circ\varphi_{\gamma(t)}$ by Lemma~\ref{lmm:Pf}, equation (\ref{eq:Phi}) implies $f\circ\Phi(t)=F_{\gamma(t)}\circ P_\gamma^t\circ \varphi_{\gamma(0)}$ for each $t\in I$. If we compare this to the definition of $f$, we see that the relations $f\circ \Phi(t)=f$ and $F_{\gamma(t)}\circ P_\gamma^t=F_{\gamma(0)}$ are equivalent.

Next we show that $\Phi$ takes values only in $\iso(f)$ if, and only if, $(\nabla P)_{\dot\gamma(t)}$ is in $\mathfrak i(F_{\gamma(t)})$ for any $t\in I$. This will conclude the proof, since any vector in $T\ir U$ is the velocity of a curve in $\ir U$.

Consider a vector $w\in T_{\gamma(0)}\ir U$. We have $P_{\gamma}^t(w)=X(t)$, where $X$ is the unique vector field along $\gamma$ such that $\nabla_\gamma X=0$ and $X(0)=w$. Let $(E_i)_{i=1}^n$ be the $P$-parallel frame field on $\ir U$ given by $E_i(p):=\varphi(p,e_i)$. Then we can write $X=X^i(E_i\circ\gamma)$ and $\dot\gamma=(\dot\gamma)^i(E_i\circ\gamma)$ for some smooth functions $X^i$, $(\dot\gamma)^i$ on $I$, and for all $t\in I$ we have
\begin{align*}
  0&=\nabla_\gamma X(t)=\nabla_\gamma(X^i(E_i\circ\gamma))(t)\\
  &=(X^i)'(t)(E_i\circ\gamma)(t)+X^i(t)(\nabla P)_{\dot\gamma(t)}E_i(\gamma(t))\\
  &\hspace{-4pt}\overset{(\ref{eq:NablaP})}=\big((X^i)'(t)+(\dot\gamma)^j(t)X^k(t)\Gamma^i_{jk}(\gamma(t))\big)(E_i\circ \gamma)(t).
  \end{align*}
Let $\Phi(t)=(\Phi^i_j(t))$. By (\ref{eq:Phi}) and by $P_{\gamma}^t(w)=X(t)$ we obtain $w^l\Phi^i_l(t)=X^i(t)$, which, together with the calculation above, lead to
\[
 0=w^l(\Phi^i_l)'+w^l\Phi^k_l(\dot\gamma)^j(\Gamma^i_{jk}\circ\gamma),\quad i\in\{1,\dots,n\}.
\]
Since the vector $w$ is arbitrary, we see that $\Phi$ satisfies an ODE of the form (\ref{eq:DE}) with $A(t)=\big(-(\dot\gamma)^j(t)\Gamma^i_{jk}(\gamma(t))\big)$. Lemma~\ref{lmm:isom} and Remark~\ref{rmrk:isom} imply that $\Phi$ runs in $\iso(f)$ if, and only if, the matrices $\big(-(\dot\gamma)^j(t)\Gamma^i_{jk}(\gamma(t))\big)$ are in the Lie algebra $\mathfrak i(f)$ of $\iso(f)$ for each $t\in I$.

It remains to show that $((\dot\gamma)^j(t)\Gamma^i_{jk}({\gamma(t)}))\in\mathfrak i(f)$ and $(\nabla P)_{\dot\gamma(t)}\in\mathfrak i(F_{\gamma(t)})$ are equivalent for all $t\in I$. We consider $\mathfrak i(f)$ and $\mathfrak i(F_{\gamma(t)})$ as linear subspaces of $\oper M_n(\RR)$ and $\oper{End}(T_{\gamma(t)}\ir U)$, respectively. We have the linear isomorphism
\[
 \mathbf c\colon B\in \oper M_n(\RR)\mapsto \varphi_{\gamma(t)}\circ B\circ\varphi_{\gamma(t)}\inv\in\oper{End}(T_{\gamma(t)}\ir U).
\]
In fact, $\mathbf c$ is just the mapping $(B^i_k)\mapsto B^i_k E^k({\gamma(t)})\otimes E_i({\gamma(t)})$ (where $(E^i)_{i=1}^n$ is the dual frame of $(E_i)_{i=1}^n$), therefore
\begin{equation}\label{eq:ciso}
 {\bf c}((\dot\gamma)^j(t)\Gamma^i_{jk}({\gamma(t)}))=(\dot\gamma)^j(t)\Gamma^i_{jk}({\gamma(t)})E^k({\gamma(t)})\otimes E_i({\gamma(t)})\overset{(\ref{eq:NablaP})}=(\nabla P)_{\dot\gamma(t)}.
\end{equation}
One can easily check that ${\bf c}\upharpoonright\iso(f)$ is a group isomorphism from $\iso(f)$ to $\iso(F_{\gamma(t)})$, because  $F_{\gamma(t)}\circ\varphi_{\gamma(t)}=f$. Thus its derivative at the unit element is a linear isomorphism from $\mathfrak i(f)$ onto $\mathfrak i(F_{\gamma(t)})$. However, ${\bf c}$ is linear, so its derivative is itself. We conclude that ${\bf c}$ is a bijection from $\mathfrak i(f)$ onto $\mathfrak i(F_{\gamma(t)})$, hence (\ref{eq:ciso}) implies our claim.
\end{proof}

\begin{trivlist}
\item \emph{Proof of Theorem~\ref{chargenber}.}
  Consider a definite, continuous function $F\colon TM\to \RR$. Recall that our base manifold $M$ is connected.

  (1) First, let us assume that the function $F$ is compatible with a covariant derivative $\nabla$ on $M$. Fix a point $p\in M$ and a chart $(\ir U,u)$ around $p$ such that $u(\ir U)$ is convex in $\RR^n$. Now we construct a parallelism on $\ir U$. For an arbitrary point $q \in \ir U$ consider the parametrized line segment $c_q$ connecting $u(p)$ and $u(q)$. Then $\gamma_q:=u\inv \circ c_q$ is a curve in $\ir U$ connecting $p$ with $q$. Now let
\[
 P(p,q):=P_{\gamma_q},
\]
where $P_{\gamma_q}$ is the parallel translation along $\gamma_q$ with respect to $\nabla$. For any $q_1,q_2\in\ir U$ define $P(q_1,q_2)$ as
\[
 P(q_1,q_2):=P(p,q_2)\circ P(p,q_1)\inv.
\]
It can be checked easily that $P$ is a parallelism over $\ir U$; the smoothness follows from the smooth dependence on parameters of ODE solutions. It is also clear by the holonomy invariance of $F$ that for any $q,r\in \ir U$ we have
\[
  F_r\circ P(q,r)=F_r\circ P_{\gamma_r}\circ P\inv_{\gamma_q}=F_p\circ P_{\gamma_q}\inv=F_q,
\]
which means that $F$ is indeed compatible with $P$.

To obtain a covering parallelism of $M$, we can apply the same method for sufficiently many $p\in M$.

(2) In this part we assume that $F$ is compatible with a covering parallelism $(\ir U_\alpha, P_\alpha)_{\alpha\in \ir A}$ of $M$, and we construct a covariant derivative $\nabla$ compatible with $F$.

We define a covariant derivative $\nabla^\alpha$ on each $\ir U_\alpha$ by setting all of its Christoffel symbols zero (with respect to a $P_\alpha$-parallel frame field). Then for each $v\in \tau\inv(\ir U_\alpha)$ the endomorphism $(\nabla^\alpha P_\alpha)_v$ is zero.
These covariant derivatives are compatible with (the proper restrictions of) $F$ by Lemma~\ref{lmm:compalg}.

If $\ir U_\alpha$ and $\ir U_\beta$ intersect, and $v\in\tau\inv(\ir U_\alpha\cap\ir U_\beta)$, then the endomorphisms $(\nabla^\alpha P_\beta)_v$ and $(\nabla^\beta P_\alpha)_v$ are no longer zero in general, but  they are still in the Lie algebra $\mathfrak i(F_{\tau(v)})$ of $\iso(F_{\tau(v)})$,
since $F$ is holonomy invariant with respect to $\nabla^\alpha$ and $\nabla^\beta$ (over $\ir U_\alpha$ and $\ir U_\beta$, respectively).
Thus, if we choose a partition of unity $(f_\alpha)_{\alpha\in\ir A}$ subordinate to the covering $(\ir U_\alpha)_{\alpha\in\ir A}$, the covariant derivative  $\nabla:=f_\alpha\nabla^\alpha$ on $M$ still has the property that the endomorphisms $(\nabla P_\alpha)_v$ are in $\mathfrak i(F_{\tau(v)})$.
Hence, by Lemma~\ref{lmm:compalg} again, $\nabla$ is compatible with $F$ over each $\ir U_\alpha$.
However, if $F$ is invariant under the parallel translation along pieces of a curve, it is invariant along the entire curve, thus $F$ is holonomy invariant with respect to $\nabla$, and the proof is complete.\hfill$\square$
\end{trivlist}

As a special case of Theorem~\ref{chargenber}, we obtain a characterization of generalized Berwald manifolds. For our purposes the following definition of such  manifolds is the most convenient (cf., \cite{SzaSzi01}, Definition~4.1\ and Proposition~4.3).

\begin{definition}
  A Finsler manifold $(M,F)$ is said to be a \emph{generalized Berwald manifold} if there exists a covariant derivative $\nabla$ on the base manifold $M$, such that the parallel translations induced by $\nabla$ preserve the Finsler function $F$.
\end{definition}

This is just Definition \ref{nablacomp} choosing $F$ to be, in particular, a Finsler function, thus a Finsler manifold $(M,F)$ is a generalized Berwald manifold if $F$ is holonomy invariant with respect to some covariant derivative on $M$. Using our main result we can express this condition in terms of parallelisms.

\begin{corollary}\label{genber}
  A Finsler manifold is a generalized Berwald manifold if, and only if, the Finsler function is compatible with a covering parallelism.
\end{corollary}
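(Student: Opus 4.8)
The plan is to deduce this directly from Theorem~\ref{chargenber}, so the only real work is to check that a Finsler function falls under the hypotheses of that theorem. Recall that a Finsler function $F\colon TM\to\RR$ is by definition continuous on all of $TM$ (indeed smooth off the zero section), positive on $TM\setminus\{0\}$, and vanishing on the zero section; in particular it satisfies the definiteness condition $F(v)=0$ if and only if $v=0$. Thus $F$ meets precisely the two standing assumptions---continuity and definiteness---required in the statement of Theorem~\ref{chargenber}.

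Next I would unwind the definitions to match the two sides of the desired equivalence. By the definition of a generalized Berwald manifold given just above the corollary, $(M,F)$ is a generalized Berwald manifold exactly when $F$ is holonomy invariant with respect to some covariant derivative $\nabla$ on $M$ (Definition~\ref{nablacomp}). This is verbatim the left-hand condition of Theorem~\ref{chargenber}. The right-hand condition, compatibility of $F$ with a covering parallelism, is the conclusion we want.

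Finally, since $F$ is a continuous definite function, Theorem~\ref{chargenber} applies and tells us that $F$ is holonomy invariant with respect to some covariant derivative if, and only if, it is compatible with a covering parallelism. Combining this with the identification of the first property as the defining property of a generalized Berwald manifold yields the corollary. I do not expect any genuine obstacle here: the entire content is supplied by Theorem~\ref{chargenber}, and the sole point demanding care is the verification that the Finsler axioms guarantee continuity and definiteness, so that the theorem is legitimately invoked.
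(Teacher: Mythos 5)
Your proposal is correct and follows exactly the paper's route: the corollary is obtained as a direct specialization of Theorem~\ref{chargenber}, the only substantive check being that a Finsler function is continuous and definite, which you verify just as the paper implicitly does. Nothing is missing.
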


\begin{remark}
All our results remain true in a more general setting. Let $\pi\colon E\to M$ be an arbitrary (real) vector bundle, and let $F\colon E\to \RR$ be a continuous function which is definite in the above sense. Also in this case it is possible to define the compatibility of $F$ with a covariant derivative on the vector bundle and with a covering parallelism (the latter can be defined on the analogy of the tangent bundle case), and it turns out again that these compatibility concepts are equivalent.
\end{remark}
%
%-------------------------------------------------------------------
%
\section{An example of a proper generalized Berwald\\ manifold}

In this section we present a simple example of a generalized Berwald manifold, which is not of Berwald type. The idea is to define a Finsler function on a manifold which is compatible with a unique covariant derivative, and to show that this particular covariant derivative has non-vanishing torsion.

Our example will be a two-dimensional Randers manifold. We are going to define the covariant derivative with the help of a global parallelism,
and heavily use that there is a natural correspondence between the set of global parallelisms and 2-frames on the manifold.
\medskip

\emph{(1) Construction of the Randers manifold and a compatible parallelism.}\quad Let us consider the two-dimensional manifold $\RR^2$ and its standard global chart $(\RR^2,(x,y))$.
Define a 2-frame on $\RR^2$ by
\[
  E_1:=x\parcv{}{x}+\parcv{}{y},\quad E_2:=-\parcv{}{x},
\]
and let $E^1:=dy$, $E^2:=-dx+x\,dy$ be its dual frame. Consider the Finsler norm $f:=\sqrt{4x^2+12y^2}-x$ on $\RR^2$. Then
\[
F:=f\circ(E^1,E^2)=\sqrt{4(dy)^2+12(-dx+x\,dy)^2}-dy
\]
is a Finsler function for $\RR^2$ of Randers type.

The frame field $(E_1,E_2)$ induces a parallelism
\[
P(p,q)(v):=E^1(v)E_1(q)+E^2(v)E_2(q)
\]
($p,q\in\RR^2$, $v\in T_p\RR^2$), which is compatible with the Finsler function $F$. Indeed, if $w:=P(p,q)(v)$, then $E^1(w)=E^1(v)$ and $E^2(w)=E^2(v)$, hence $F(w)=F(v)$.
\medskip

\emph{(2) Construction of a compatible covariant derivative.}\quad Let $\nabla$ be the covariant derivative on $\RR^2$ characterized by $\nabla E_1=\nabla E_2=0$. Then for any $p\in\RR^2$, $v\in T_p\RR^2$ the mapping
\[
  X_v\colon q\in \RR^2 \mapsto X_v(q):=P(p,q)(v):=E^1(v)E_1(q)+E^2(v)E_2(q)\in T_q\RR^2
\]
is a vector field on the plane satisfying $\nabla X_v=0$. Hence the parallel translation along a curve $\gamma\colon I\to \RR^2$ acts by
\[
P_\gamma^t(v)=X_v(\gamma(t))=P(\gamma(0),\gamma(t))(v) \ \mbox{ for } v\in T_{\gamma(0)}\RR^2.
\]
Since $F$ is compatible with the parallelism $P$, it follows that $F$ is holonomy invariant with respect to $\nabla$.
Therefore $(\RR^2,F)$ is a generalized Berwald manifold.
\medskip

\emph{(3) There is no other covariant derivative compatible with $F$.}\quad Notice first that the isometry group of $F_p$ has only two elements for any $p\in\RR^2$. More precisely, in the basis $(E_1(p),E_2(p))$, the elements of $\oper{iso}(F_p)$ are represented by the matrices
\[
\left(\begin{array}{cc}1 & 0 \\0 & 1 \\\end{array}\right)\quad \mbox{ and }\quad \left(\begin{array}{cc}1 & 0 \\0 & -1 \\\end{array}\right).
\]
Indeed, if we assume that a linear mapping $A\colon \RR^2\to \RR^2$ is an isometry of the Finsler norm $f:=\sqrt{4x^2+12y^2}-x$,
then the four conditions that $A$ preserves the norms of the vectors $(1,0)$, $(-1,0)$, $(0,1)$ and $(0,-1)$ imply that $A$ is either the identity or the reflection about the axis $y=0$.

Now suppose that $F$ is holonomy invariant with respect to another covariant derivative $\bar\nabla$, and let $\gamma\colon I\to\RR^2$ be a curve. Then for the parallel translation $\bar P_\gamma^t$ we have $(\bar P_\gamma^t)\inv\circ P_\gamma^t\in\iso(F_{\gamma(0)})$. The parallel translations are smooth, hence the linear automorphism $(\bar P_\gamma^t)\inv\circ P_\gamma^t$ depends continuously on $t$. Since $(\bar P_\gamma^0)\inv\circ P_\gamma^0=1_{T_{\gamma(0)}\RR^2}$, it follows that $P_\gamma^t=\bar P_\gamma^t$ for all $t\in I$. Then $\nabla=\bar\nabla$, because a covariant derivative is determined by its induced parallel translations (see, e.g., \cite[Proposition 6.1.59]{IFM}).
\medskip

\emph{(4) The covariant derivative $\nabla$ has non-vanishing torsion.}\quad Indeed,
\[
\mathsf{T}^\nabla(E_1,E_2)=\nabla_{E_1}E_2-\nabla_{E_2}E_1-[E_1,E_2]=-\parcv{}{x}.
\]
Thus $(\RR^2,F)$ is not a Berwald manifold.
%
%-------------------------------------------------------------------
%
\bigskip

{\bf Acknowledgements.}
    We wish to express our gratitude to our supervisor, J\'ozsef Szilasi for encouraging us to write this paper and for his guidance throughout the work. We are thankful to the referee and to Rezs\H{o} Lovas for their valuable suggestions.

%
%-------------------------------------------------------------------
%

\bigskip

\noindent\emph{Bernadett Aradi}\\                              
    MTA-DE Research Group ``Equations, Functions and Curves'',\\
    Hungarian Academy of Sciences\\
    and Institute of Mathematics, University of Debrecen\\ 
    H--4010 Debrecen, P.O. Box 12, Hungary\\        
    E-mail: bernadett.aradi@science.unideb.hu         
\medskip

\noindent\emph{D\'avid Csaba Kert\'esz}\\
    Institute of Mathematics, University of Debrecen\\ 
    H--4010 Debrecen, P.O. Box 12, Hungary\\        
    E-mail: kerteszd@science.unideb.hu

%
%-------------------------------------------------------------------
%
\end{document}